\newcommand{\led}{\langle}
\newcommand{\red}{\rangle}
\newcommand{\help}{\mathbb N}
\newcommand{\Ups}{\Upsilon}
\newcommand{\ex}{\mathbb E}
\newtheorem{claim}{Claim}
\title{ \Large\textsc{FIRST-PASSAGE PERCOLATION WITH \\ EXPONENTIAL TIMES ON A 
LADDER\footnote{MSC Primary 82B43. MSC Secondary 60K35.}}}
\author{Henrik Renlund\footnote{
Math.\ Department, Uppsala University, P.0.\ Box 480, 751\,06 Uppsala, Sweden}}
\date{}
\begin{document}

\maketitle

\begin{abstract}
We consider first-passage percolation on a ladder, i.e.\ the graph $\help\times\{0,1\}$
where nodes at distance 1 are joined by an edge, and
the times are exponentially i.i.d.\ with mean 1. We find an appropriate
Markov chain to calculate an
explicit expression for  the \emph{time constant} whose numerical
value is $\,\approx 0.6827$. This time constant is the long-term
average inverse speed of the process. We also calculate the average
residual time. 
\end{abstract}

\section{Introduction}
Consider a graph $G$ with vertex (node) set $V$ and edge set $E\subset V\times V$. An
(undirected) edge $e=\led v,v' \red =\led v',v \red$ joins vertex $v$ and $v'$.
A path $\pi(v,v')$ between $v$ and $v'$, if it exists, is an alternating sequence of vertexes 
and edges $(v_0,e_1,v_1,\ldots,e_n,v_n)$ such that $e_i=\led v_{i-1},v_i \red$ for $i=1,\ldots,n$, 
$v_0=v$ and $v_n=v'$. 

Associate with each edge $e$ a non-negative random variable $\xi_e$. Let the time of
a path $\pi(v,v')$ be 
\[ T\pi(v,v')=\sum_{e\in \pi(v,v')} \xi_e\quad\mbox{and let}\quad
 T(v,v')=\inf_{\pi(v,v')} T\pi(v,v') \]
be the shortest time of any path between $v$ and $v'$. This may be
called the first passage time of $v'$ (with respect to $v$) and
is the subject of investigation in first passage percolation,
see e.g.\ \cite{SW78}. 

We will think of this as a model for a contagious disease and refer 
to the first passage time of a node as the time of infection.
Usually, we think of some node $v$ (or more generally a set of nodes) as infected at time zero.
The times $T(w)=T(v,w)$, $w\in V$, lets us know when nodes $w\in V$ 
are infected. 

One natural question is; how fast is the spread of  the infection? The present paper considers
the equivalent question of the inverse speed of percolation on a ladder, i.e.\ how long
does it take for the infection to spread one more step up the ladder, averaged over time. 
We also calculate the average inverse speed in another sense; namely the average residual time.

\section{First-passage percolation on the ladder}

First-passage percolation on a ladder has previously been studied in \cite{FGS06}, which
gives a method of calculating the time constant when the times associated with 
edges have a discrete distribution, as well as a method for getting arbitrarily 
good bounds for the same quantity
when the distribution is continuous (and well behaved). Our paper gives an exact formula for
the time constant only when the times are exponential and the method we use relies heavily on the
special properties of this distribution.

It has been brought to our attention that during the referee period of the present paper 
\cite{Sch09} has emerged, which also contains the percolation rate
(inversion of the time constant), although it is achieved through other methods. 

Let $L$ be a ladder, by which we mean a graph with vertex set $V=\help\times \{0,1\}$ and 
where the edge set
$E$ consists of $\led v,v'\red$ such that $v$ and $v'$ are at distance one
from each other, see Figure \ref{ladder}. 
By $\help$ we mean
the set of non-negative integers $\{0,1,2,\ldots\}$.

\begin{figure}[htb] \begin{center}
 \includegraphics[scale=0.64]{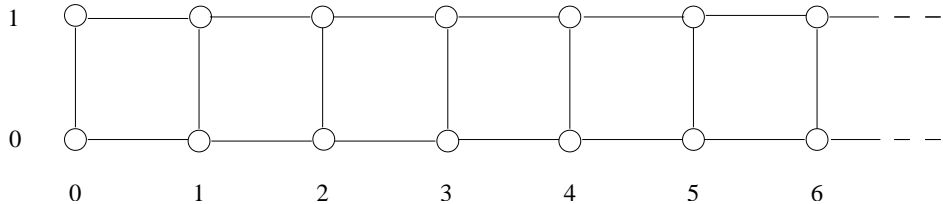}
\end{center}\caption{Part of the ladder $L$.}\label{ladder}\end{figure} 

We will consider the model where the nodes $(0,0)$ and $(0,1)$ are infected
at time zero and each edge is associated with a random variable that
has an exponential distribution with mean 1. Random variables associated
to different edges are independent. 

For a node $v=(x,y)$ we will say
that $v$ is at height $x\in\help$ and level $y\in\{0,1\}$. 
Recall that $T[(x,y)]$ is the infection time of node $(x,y)$. Let
\[ N_t^{(i)}=\sup\{x: T[(x,i)]\leq t\},\quad i=0,1, \]
i.e.\ let $N_t^{(0)}$ and $N_t^{(1)}$ denote the height of the infection
at level $0$ and $1$, respectively, at time $t$. 

We will let $N_t=\max\{N_t^{(0)},N_t^{(1)}\}$ denote the height of the infection, i.e.\
the largest height of any infected node, at time $t$.

We aim to calculate the time constant $\tau$, defined as the limiting inverse percolation rate, 
$1/\tau =\lim_t N_t/t$.
We will not do this directly but calculate the percolation rate
at ``time infinity'', or rather when a process related to $N_t$ has reached stationarity.

More explicitly, we will employ an observation by Alm and 
Janson \cite{snack} that arose in connection with their joint
work on one-dimensional lattices \cite{AJ90}, namely; it might be fruitful to
consider  the \emph{front} $F_t=|N_t^{(0)}-N_t^{(1)}|$
at time $t\geq 0$. Front thus means the absolute value of the difference
in height between the highest infected nodes on each level, see
Figure \ref{front}.

Now, 
the process $\{F_t, t\geq0 \}$ behaves like a continuous time Markov chain
on $\help$, with $F_0=0$. Suppose that this Markov chain tends to
a stationary distribution $\Pi=(\pi_0,\pi_1,\pi_2,\ldots)$ on $\help$. 
Then at a late time $t$ the process $F$ will be in state $0$ with probability 
$\pi_0$. From this state there are two  possible nodes that may be the
next infected ones that result in an increase of the $N$-process, i.e.\ 
the intensity is 2 towards a state that increases the height
of the infection, since these two nodes are associated with two distinct
edges (and thus two distinct exponentially distributed mean 1 random variables). 
With probability $1-\pi_0$ the $F$ process is in some other state
($F\geq 1$) from where there is intensity 1 towards a state that increases the
height of the infection. Thus, knowing the stationary distribution (or rather knowing $\pi_0$)
gives us the 
percolation rate
at a late time as
\begin{equation} \label{heightint}
 2\pi_0+1(1-\pi_0)=1+\pi_0.
\end{equation}

Below we calculate this stationary distribution $\Pi$. To do so, 
it turns out that we may express any $\pi_n$ in terms of $\pi_0$ in the form
$\pi_n=a_n\pi_0-b_n$ where both sequences of (positive) coefficients, $a_n$ and $b_n$, 
satisfy a certain recursion (Claim 1) and
can be expressed in terms of the Bessel functions of the first and second kind. 
We stumbled upon the solution, and
may not have solved it otherwise, when it was noted that the sequence $b_n$ transformed into 
$B_n=(b_n-b_{n-1})/n$ is part of sequence A058797, as listed in \cite{Integer}, 
which is related to the Bessel functions. This relation to Bessel functions was
a surprise and we see no reason to expect it in this context. This method can be 
generalized to percolation on a more general ``ladder-like'' graph, see
\cite{PR}. The sequence $b_n$ is also a subsequence of A056921.

\begin{figure}[htb] \begin{center}
 \includegraphics[scale=0.59]{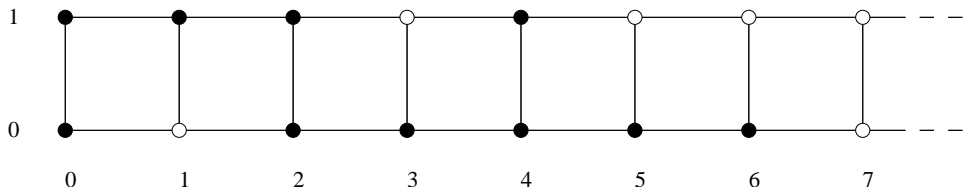}
\end{center}\caption{Infected nodes at time $t$ marked as black. Here
$N_t=6$ and $F_t=2$. }\label{front}\end{figure} 

Now, we start by looking at the intensity matrix $Q$ of the process $F_t$. 
Consider the case in Figure \ref{front}, where $F_t=2$. Now,
there are two edges leading to the node $(5,1)$ and if this is the next infected, the process
will end up in state $F=1$, i.e.\ the intensity
to this state is 2. There is one edge each to node $(6,1)$ and $(7,0)$ and if 
these are the next ones infected,
the process will end up in state $F=0$ or $F=3$, respectively. 
This argument gives us the third row of the intensity matrix $Q$.
We omit the rest of the details, but it is not hard to see that 
\begin{equation} \label{Q}
Q=\left( \begin{array}{rrrrrrr}
-2 & 2 & 0 & 0 & 0  \\
 2 & -3 & 1 & 0 & 0 \\
 1 & 2 & -4 & 1 &0  &\ldots \\
 1 & 1 & 2 & -5 &1 \\
 1 & 1 & 1 & 2 & -6 \\
 1 & 1 & 1 & 1 & 2 &   \\
  &  &\vdots  &&  &  \ddots
\end{array} \right). \end{equation}
$\{F_t,t\geq 0\}$ is irreducible, so if we can find a distribution 
$\Pi=(\pi_0,\pi_1,\pi_2,\ldots)$ on $\help$ 
such that  $\Pi Q=0$
then $\Pi$ is the unique stationary distribution
for $\{F_t, t\geq 0\}$, by  Markov chain theory.
For calculations we henceforth assume that such a distribution $\Pi$ exists.

Using the equations arising from columns one to three of $Q$, the condition $\Pi Q=0$ and
$\sum\pi_j=1$ we get
\begin{equation}\label{first}
 \pi_1=3\pi_0-1,\quad \pi_2 = 11\pi_0-5,\quad\mbox{and}\quad \pi_3 = 56\pi_0-26.
\end{equation}

Also, taking differences of equations arising from adjacent columns of $Q$, from column three
and upwards, gives the equations
\begin{equation}\label{C}
 \quad 0 = \pi_{n-3}-(n+1)\pi_{n-2}+(n+3)\pi_{n-1}-\pi_{n}, \quad n\geq 4.  
\end{equation}

We set $\pi_n=a_n\pi_0-b_n$, where $a_1,a_2,a_3,b_1,b_2,b_3$ are
defined by (\ref{first}) for $n=1,2,3$ and, for $n\geq 4$ by
(\ref{C}) with $a_n$ or $b_n$ in place of $\pi_n$. Table \ref{aochbtabell}
gives the first nine numbers in the sequences $\{a_n\}$ and $\{b_n\}$.	

\begin{table}[hbt]\begin{center}\begin{tabular}{c||c c c c c c c c c } 
 $n$ & 1 & 2 & 3 & 4 & 5 & 6 & 7 & 8 & 9 \\ \hline \hline
 $a_n$ & 3 & 11 & 56 & 340 & 2395 &  19231 &  173490 & 1737706 & 19136803  \\ \hline
 $b_n$ & 1 & 5 & 26 & 158 &  1113  &  8937  & 80624 & 807544 & 8893225
\end{tabular}\end{center}\caption{The beginning of the sequences $\{a_n\}$ and $\{b_n\}$.}
\label{aochbtabell}\end{table}

\begin{claim}
 The sequences $\{a_n\}$ and $\{b_n\}$ both satisfy the recursion
\begin{equation}\label{aob}
 c_n=\frac{c_{n+1}-c_n}{n+1}-\frac{c_{n}-c_{n-1}}{n},\quad n\geq 2.
\end{equation}
\end{claim}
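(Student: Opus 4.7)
The plan is to reformulate (\ref{aob}) as a cleaner second-order relation and then proceed by induction on $n$, using (\ref{C}) at each step. Multiplying (\ref{aob}) through by $n(n+1)$ and rearranging yields the equivalent statement that the quantity
\[ E_n := n\,c_{n+1} - (n^2+3n+1)\,c_n + (n+1)\,c_{n-1} \]
vanishes for every $n\geq 2$. Since $\{a_n\}$ and $\{b_n\}$ both satisfy (\ref{C}) for $n\geq 4$ and only differ in their initial segments as prescribed by (\ref{first}), it suffices to show $E_n=0$ for any sequence $\{c_n\}$ obeying (\ref{C}) together with either set of initial values.

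For the base case $n=2$, the identity $2c_3 = 11c_2 - 3c_1$ is a direct numerical check from (\ref{first}): for $\{a_n\}$ one has $2\cdot 56 = 11\cdot 11 - 3\cdot 3 = 112$, and for $\{b_n\}$ one has $2\cdot 26 = 11\cdot 5 - 3\cdot 1 = 52$. For the inductive step, suppose $E_{n-1}=0$ with $n\geq 3$. Equation (\ref{C}) applied at index $n+1$ (permissible because $n+1\geq 4$) gives
\[ D_{n+1} := c_{n+1} - (n+4)\,c_n + (n+2)\,c_{n-1} - c_{n-2} = 0. \]
A short expansion, collecting coefficients of $c_{n+1},c_n,c_{n-1},c_{n-2}$, yields the telescoping identity $E_n = n\,D_{n+1} + E_{n-1}$, whence $E_n=0$ and the induction closes.

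The main obstacle, if it can be called that, is spotting this telescoping identity: the quadratic coefficient $n^2+3n+1$ appearing in $E_n$ is tuned precisely so that the third-order recursion (\ref{C}) collapses, modulo the inductive hypothesis, into the second-order recursion (\ref{aob}). Once one attempts to eliminate $c_{n-2}$ from (\ref{C}) via $E_{n-1}=0$, the identity essentially falls out. Everything else is bookkeeping of index shifts and a small set of coefficient expansions.
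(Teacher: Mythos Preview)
Your proof is correct and follows essentially the same route as the paper: both arguments check the base case $n=2$ numerically and then induct, using the third-order relation (\ref{C}) (shifted to involve $c_{n+1},c_n,c_{n-1},c_{n-2}$) together with the induction hypothesis to deduce the second-order relation at the next index. Your packaging via $E_n = nD_{n+1} + E_{n-1}$ is a tidy denominator-cleared version of the paper's substitution-with-fractions computation, but the content is the same.
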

\begin{proof}
We will use induction. First, check initial conditions, $a_2=11$ and $(56-11)/3-(11-3)/2=11$,
$b_2=5$ and $(26-5)/3-(5-1)/2=5$, so equation (\ref{aob}) holds for $n=2$.
Rearranging (\ref{aob}) gives
\begin{equation}\label{an}
 c_{n+1}-c_n=(n+1)\left(c_n+\frac 1n[c_n-c_{n-1}]\right),\quad n\geq 2,
\end{equation}
and this relation is what we will prove.
So assume (\ref{an}) holds for some $n\geq 2$. We aim to show that
it also holds for $n+1$. So, as $n+2\geq 4$, we can use 
(\ref{C}). Adding $\pi_n-\pi_{n-1}$ to both sides of, and substituting $n+2$ for $n$ in 
(\ref{C}) gives
\begin{align}
 c_{n+2}&-c_{n+1}=c_{n-1}-(n+3)c_n+(n+5)c_{n+1}-c_{n+1} \nonumber \\
&=(n+2)\bigg(c_{n+1}+\frac{1}{n+2}\big[2c_{n+1}-(n+2)c_n-\underbrace{(c_n-c_{n-1})}_{(*)}\big]\bigg). \label{it}
\intertext{From (\ref{an}), and the induction hypothesis, we get $c_n-c_{n-1}=(nc_{n+1}-n(n+2)c_n)/(n+1)$, 
which we apply on $(*)$ above, and}
(\ref{it})&= {\textstyle (n+2)\left(c_{n+1}+\frac{1}{n+2}\left[\left(2-\frac{n}{n+1}\right)c_{n+1} 
-\left(n+2-\frac{n(n+2)}{n+1} \right)c_n \right]\right) } \nonumber \\
&=(n+2)\left(c_{n+1}+\frac1{n+1}[c_{n+1}-c_n]\right) . \nonumber
\end{align}
\end{proof}
Now set $C_n=(c_n-c_{n-1})/n$. Then, if $c_n$ 
satisfies (\ref{aob}), i.e.\ $c_n=C_{n+1}-C_n$, this implies
both 
\begin{equation}\label{storaCn}
 C_{n+1}=c_n+C_n  
\end{equation}
and $C_n=C_{n+1}-c_n$. Substitute
$n-1$ for $n$ in the last equation to get 
\begin{equation}\label{storaCn-1}
 C_{n-1}=C_n-c_{n-1}.
\end{equation}
Then adding (\ref{storaCn}) and (\ref{storaCn-1}) and replacing $c_n-c_{n-1}$ with $nC_n$
gives us the following.
\begin{claim}
$A_n=(a_n-a_{n-1})/n$ and $B_n=(b_n-b_{n-1})/n$ both satisfy the
recursion
\begin{equation}\label{rec1}
 C_{n+1}+C_{n-1}=(n+2)C_n,\quad n\geq 3.
\end{equation}
\end{claim}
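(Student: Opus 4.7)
The plan is to apply Claim 1 twice, once to $a_n$ and once to $b_n$, and assemble the consequences already derived in the displayed equations (\ref{storaCn}) and (\ref{storaCn-1}).

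First I would observe that Claim 1 says exactly $c_n = C_{n+1} - C_n$ for $n \geq 2$, so the two displayed rearrangements (\ref{storaCn}) and (\ref{storaCn-1}) apply to both $c_n = a_n$ and $c_n = b_n$, provided we stay within the range where Claim 1 is valid. Since (\ref{storaCn-1}) requires replacing $n$ by $n-1$ in the identity $c_n = C_{n+1} - C_n$, i.e.\ the index $n-1 \geq 2$, the combined argument will go through for $n \geq 3$, which matches the claimed range.

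Next I would simply add (\ref{storaCn}) and (\ref{storaCn-1}) to obtain
\[
C_{n+1} + C_{n-1} = 2C_n + (c_n - c_{n-1}),
\]
and then substitute $c_n - c_{n-1} = nC_n$ from the definition $C_n = (c_n - c_{n-1})/n$. This yields $C_{n+1} + C_{n-1} = (n+2)C_n$, which is (\ref{rec1}). Applying this calculation with $c_n = a_n$ gives the recursion for $A_n$, and with $c_n = b_n$ gives it for $B_n$.

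There is essentially no obstacle here: the work has already been done in deriving (\ref{storaCn}) and (\ref{storaCn-1}) from Claim 1, and the proof reduces to a one-line addition followed by substitution of the definition of $C_n$. The only small care is to note the correct index range $n \geq 3$, inherited from the fact that (\ref{aob}) is stated for $n \geq 2$ and (\ref{storaCn-1}) uses it at index $n-1$.
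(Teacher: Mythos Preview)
Your proposal is correct and is exactly the paper's argument: the paper derives (\ref{storaCn}) and (\ref{storaCn-1}) from Claim~1, adds them, and replaces $c_n-c_{n-1}$ by $nC_n$ to obtain (\ref{rec1}). Your additional remark that the substitution $n\mapsto n-1$ forces $n\geq 3$ makes the index-range justification explicit, which the paper leaves implicit.
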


\subsection{Bessel functions}
The functions
\begin{equation*}
J_n(x)=\sum_{k=0}^\infty \frac{(-1)^k}{k!(n+k)!}\left(\frac x2\right)^{n+2k} 
\end{equation*}
and
\begin{align*}
 Y_n(x)=\frac 1\pi\bigg[2\left(\gamma+\ln\frac x2\right)J_n(x) -
\sum_{k=0}^{n-1}\frac{(n-k-1)!}{k!}\left(\frac x2\right)^{2k-n}  \\ 
-  \sum_{k=0}^\infty(\varsigma_k+\varsigma_{k+n})\frac{(-1)^k}{k!(n+k)!}\left(\frac x2\right)^2\bigg],
\end{align*}
where $n\in\help$, $\varsigma_m=\sum_{j=1}^m\frac 1j$ and $\gamma=0.577\ldots$ is Euler's constant,
are known as Bessel functions of first and second kind. They both satisfy
the recursion
\begin{equation}\label{rec2}
 C_{n+1}(x)+C_{n-1}(x)=\frac{2n}{x}C_n(x).
\end{equation}
Define the function
\begin{equation}\label{Upsdef}
 \Ups(n,m)=\pi[J_{n}(2)Y_m(2)-J_m(2)Y_{n}(2)].
\end{equation}
This function inherits recursion (\ref{rec2}) in
parameter $n$ with $x=2$, because for $m$ fixed 
$\Ups$ is a linear combination of $J_n(2)$ and $Y_n(2)$, so that
\begin{equation} \label{Ups}
\Ups(n+1,m)+\Ups(n-1,m)=n\Ups(n,m). 
\end{equation}

\begin{claim}
 For all integers $n$ and $m$ the function $\Ups(n,m)$ is integer valued.
\end{claim}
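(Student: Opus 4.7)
The plan is to reduce the claim to two easy base cases, exploiting the recursion (\ref{Ups}) in the first argument and the antisymmetry $\Ups(m,n)=-\Ups(n,m)$, which is immediate from the definition (\ref{Upsdef}).

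First I would notice that $\Ups(n,n)=0$ for every integer $n$, again immediately from (\ref{Upsdef}). Second, I would appeal to the classical Bessel Wronskian identity
$$J_{\nu+1}(x)Y_\nu(x)-J_\nu(x)Y_{\nu+1}(x)=\frac{2}{\pi x},$$
valid for all $\nu$; evaluated at $x=2$ this yields $\Ups(n+1,n)=1$ for every integer $n$.

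With these two inputs in hand, fix an integer $m$ and induct on $n$ for $n\geq m$, reading (\ref{Ups}) as
$$\Ups(n+1,m)=n\,\Ups(n,m)-\Ups(n-1,m),$$
so each new value is a $\mathbb Z$-linear combination of the two preceding integer values, with base $\Ups(m,m)=0$ and $\Ups(m+1,m)=1$. For pairs with $n<m$ the antisymmetry $\Ups(n,m)=-\Ups(m,n)$ reduces to the case already handled, and we get integrality on all of $\mathbb Z\times\mathbb Z$.

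The only step that is not essentially bookkeeping is justifying the Wronskian formula. I would either quote it from a standard reference (e.g.\ Abramowitz--Stegun) or derive it in one line from the standard Wronskian $J_\nu(x)Y_\nu'(x)-J_\nu'(x)Y_\nu(x)=2/(\pi x)$ combined with the derivative recurrence $Y_\nu'(x)=Y_{\nu-1}(x)-(\nu/x)Y_\nu(x)$ and its analogue for $J_\nu$; this is the only place where concrete Bessel-function theory beyond what is already encoded in (\ref{rec2}) is needed.
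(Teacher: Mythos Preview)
Your proposal is correct and follows essentially the same route as the paper: both arguments reduce the claim to the two base cases $\Ups(m,m)=0$ and $\Ups(m+1,m)=1$, establish the latter via the Bessel Wronskian identity (the paper carries out exactly the derivation you sketch in your final paragraph, combining the derivative Wronskian with the recurrence for $J_n'$ and $Y_n'$), and then propagate integrality via the recursion (\ref{Ups}). Your explicit mention of the antisymmetry $\Ups(n,m)=-\Ups(m,n)$ to cover the range $n<m$ is a small clarifying addition, but the substance is the same.
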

\begin{proof}
Clearly $\Ups(m,m)=0$ so it suffices to show that $\Ups(m+1,m)$
is integer valued, since recursion (\ref{Ups}) takes care of all other
values.

Two important relations for the Bessel functions are needed. The first is that
\begin{equation}\label{fir1}
 xJ_n'(x)=nJ_n(x)-xJ_{n+1}(x),\quad \forall n.
\end{equation}
and the same holds with $Y_n$ replacing $J_n$, see
 3.2(4), p.\ 45 and 3.56(4), p.\ 66 of \cite{Wat22}, resp.
The second is a fact relating to the Wronskian, namely that
\begin{equation}\label{sec2}
 \frac2{\pi x}=J_n(x)Y'_n(x)-J_n'(x)Y_n(x), \quad \forall n, x\neq 0,
\end{equation}
see 3.63(1), p.\ 76 of \cite{Wat22}.
By dividing both sides of equation (\ref{fir1}) by $x=2$ and using the resulting
expression  for $J_n'(2)$ and $Y_n'(2)$, resp., in (\ref{sec2}) results in
\begin{align*}
 \frac 1\pi =J_n(2)\left[\frac n2Y_n(2)-Y_{n+1}(2)\right]-\left[\frac n2J_n(2)-J_{n+1}(2)\right]Y_n(2) \\
=J_{n+1}(2)Y_n(2)-J_n(2)Y_{n+1}(2)= \frac1\pi\Ups(n+1,n),
\end{align*}
the last equality by the definition (\ref{Upsdef}) of $\Ups$. Hence, $\Ups(n+1,n)=1$. 
\end{proof}

\begin{table}[hbt]\begin{center}\begin{tabular}{c||c c c c c c c} 
$n$ & 1 & 2 & 3 & 4 & 5 & 6 & 7 \\ \hline \hline
$b_n$ & 1 & 5 & 26 & 158 & 1113 & 8937 & 80624 \\
$B_n=(b_n-b_{n-1})/n$ &  & 2 & 7 & 33 & 191 & 1304 & 10241 \\
$\Ups(n,0)$ & 1 & 1 & 1 & 2 & 7 & 33 & 191 \\ \hline
$a_n$ & 3 & 11 & 56 & 340 & 2395 & 19231 & 173490 \\
$A_n=(a_n-a_{n-1})/n$ &  & 4 & 15 & 71 & 411 & 2806 & 22037\\ 
$2\Ups(n,3)+\Ups(n,0)$ & -3 & -1 & 1 & 4 & 15 & 71 & 411
\end{tabular}\end{center}\caption{Connection between
parameters $a_n, b_n$ and $\Ups(n,m)$.}\label{tabellen} \end{table}

Now we can evaluate the function $\Ups(n,m)$ for different values
of $n$ and $m$ to find that $B_n=\Ups(n+2,0)$, $n=2,3$, and $A_n=2\Ups(n+2,3)+\Ups(n+2,0)$,
$n=2,3$, see Table \ref{tabellen}. Since $\Ups(n+2,m)$ satisfies recursion
(\ref{rec1}) in $n$ this is enough to know that they agree for all $n\geq 2$.

By relation (\ref{storaCn}), definition (\ref{Upsdef}) and the above expression
for $A_n$ and $B_n$ we get
\begin{align}
 b_n &=\Ups(n+3,0)-\Ups(n+2,0)\quad\quad\mbox{and} \nonumber \\
 a_n &=2[\Ups(n+3,3)-\Ups(n+2,3)]+\Ups(n+3,0)-\Ups(n+2,0). \label{anochbn} 
\end{align}

Henceforth we abbreviate $J_n=J_n(2)$ and $Y_n=Y_n(2)$, hence e.g.\
\[ J_n=\sum_{k=0}^\infty \frac{(-1)^k}{k!(k+n)!}. \]
We need  asymptotics for $a_n$ and $b_n$.
\begin{claim}
\begin{equation}\label{asym}
 \lim_{n\to\infty}\frac{b_n}{(n+2)!}=J_0\quad\mbox{and}\quad
 \lim_{n\to\infty}\frac{a_n}{(n+2)!}=2J_3+J_0.
\end{equation}
\end{claim}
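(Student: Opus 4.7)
The plan is to reduce the claim, via formula (\ref{anochbn}), to a large-$n$ asymptotic for $\Ups(n,m)$ with $m$ fixed. Writing $\Ups(n,m)=\pi J_n Y_m - \pi J_m Y_n$, I would establish that $\Ups(n,m)\sim J_m(n-1)!$ as $n\to\infty$, from which the claim follows by a short telescoping calculation. This in turn rests on the classical large-order behavior of the two Bessel functions at $x=2$.

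For $J_n=\sum_{k\geq 0}\frac{(-1)^k}{k!(n+k)!}$ the $k=0$ term dominates, giving $J_n\sim 1/n!$, so the contribution $\pi J_n Y_m$ to $\Ups(n,m)$ is $O(1/n!)$ and therefore negligible. For $Y_n$ I would substitute $x=2$ into the explicit series in the paper: the logarithmic prefactor $2(\gamma+\ln(x/2))$ collapses to $2\gamma$ and multiplies $J_n=O(1/n!)$; the ``principal'' finite sum becomes $-\frac{1}{\pi}\sum_{k=0}^{n-1}\frac{(n-k-1)!}{k!}$, whose $k=0$ entry $-(n-1)!/\pi$ dominates because the ratio of consecutive terms equals $\frac{1}{(k+1)(n-k-1)}=O(1/n)$; and the $\varsigma$-series is $O((\ln n)/n!)$. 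Combining these pieces gives $Y_n = -(n-1)!/\pi + o((n-1)!)$, and consequently $\Ups(n,m) = -\pi J_m Y_n + O(1/n!) = J_m(n-1)! + o((n-1)!)$.

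Substituting this into (\ref{anochbn}) and using the identity $(n+2)!-(n+1)!=(n+1)(n+1)!$ yields
\begin{align*}
b_n &= J_0\bigl[(n+2)!-(n+1)!\bigr] + o((n+2)!),\\
a_n &= (2J_3+J_0)\bigl[(n+2)!-(n+1)!\bigr] + o((n+2)!).
\end{align*}
Dividing through by $(n+2)!$ and noting that $(n+1)(n+1)!/(n+2)!=(n+1)/(n+2)\to 1$ delivers the two limits in (\ref{asym}).

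The main obstacle is the careful bookkeeping needed to justify the $o((n-1)!)$ error bound in the asymptotic for $Y_n(2)$, since the explicit formula mixes pieces of rather different growth rates (factorial, logarithmic, factorially-small). Once that bound is established, the rest reduces to an elementary telescoping computation together with the elementary behavior of ratios of factorials.
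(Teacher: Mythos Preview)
Your argument is correct and follows essentially the same route as the paper: both obtain the asymptotics $J_n\sim 1/n!$ and $Y_n\sim -(n-1)!/\pi$, deduce $\Ups(n,m)\sim J_m(n-1)!$, and then read off (\ref{asym}) from (\ref{anochbn}). The only cosmetic difference is that the paper quotes the large-order asymptotics of $J_n(x)$ and $Y_n(x)$ from a handbook and invokes Stirling's formula, whereas you extract the same leading behaviour directly from the series at $x=2$.
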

\begin{proof}
 \cite{H64} lists the following asymptotic relations, for fixed $x$ and
$n$ tending to infinity,
\[ J_n(x) \sim \frac{1}{\sqrt{2\pi n}}\left(\frac{ex}{2n}\right)^n
 \quad\mbox{and}\quad Y_n(x)\sim -\sqrt\frac{2}{\pi n}\left(\frac{2n}{ex}\right)^n, \]
hence, by the well-known Stirling formula 
$n!\sim \sqrt{2\pi}n^{n+1/2}e^{-n}$ we conclude
that $J_n \sim 1/n!$ and $Y_n \sim -(n-1)!/\pi$. Thus
\[ \Upsilon(n,m) \sim \pi\frac1{n!}Y_m+\pi J_m\frac{(n-1)!}{\pi}\sim J_m\cdot(n-1)!, \]
from which (\ref{asym}) follows.
\end{proof}

Now we are ready to find an expression for
$\pi_0$. Since $\pi_n\to0$ as $n\to\infty$ we get
\begin{align}\label{pi0} 
\pi_0&=
\lim_{n	\to\infty}\frac{\pi_n+b_n}{a_n}=\frac{J_0}{2J_3+J_0}=0.4647184275\ldots 
\end{align}

This in turn gives us 
\begin{align}\label{pin}
 \pi_n &= a_n\pi_0-b_n \nonumber \\
&=2[\Upsilon(n+3,3)-\Upsilon(n+2,3)]\pi_0+[\Upsilon(n+3,0)-\Upsilon(n+2,0)][\pi_0-1] \nonumber \\
&=\frac{2\pi}{2J_3+J_0}
\bigg([J_{n+3}Y_{3}-J_{3}Y_{n+3}-J_{n+2}Y_{3}+J_{3}Y_{n+2}]J_0 \nonumber \\
&\phantom{=\frac{2\pi}{(J_3Y_0-J_0Y_3)}\bigg(}
-[J_{n+3}Y_{0}-J_{0}Y_{n+3}-J_{n+2}Y_{0}+J_{0}Y_{n+2}]J_3 \bigg) \nonumber \\
&=\frac{2\pi (J_3Y_0-J_0Y_3)}{2J_3+J_0}(J_{n+2}-J_{n+3}) 
=\frac{2\Ups(3,0)}{2J_3+J_0}(J_{n+2}-J_{n+3}) \nonumber \\
&=\frac{2}{2J_3+J_0}(J_{n+2}-J_{n+3}), \quad n\geq 1.
\end{align}
Then $\sum_0^N \pi_j=1-2J_{N+3}/(2J_3+J_0)$ and as $J_n\to 0$, as $n\to\infty$,
we can verify that $\Pi$ is indeed a distribution.

\subsection{The time constant}
Following the discussion before and after (\ref{heightint}) we get
the time constant
\begin{equation} \label{timeconstant}
\tau=\frac1{1+\pi_0}=\frac{2J_3+J_0}{2J_3+2J_0}=0.6827250759\ldots 
\end{equation}

This can be compared with first-passage percolation on $\help$, i.e.\
the infinite path graph (with
exponential r.v.'s with mean 1), where the time 
constant necessarily is 1. 
We may also compare it with 
$1/2$ which is the time constant if the rungs of the ladder
where associated with random variables all identically $0$,
i.e.\ there is immediate infection between nodes on different levels at distance
$1$ from each other. 

Note that we started the process by infecting both $(0,0)$ and $(0,1)$.
Consider the infection of the single
node $(0,0)$ at time $0$. 
Then, the front process $\hat F$ will  not be a continuous time Markov chain (as before), as the
behaviour of the front is very different before and after the random time
$Z$ until there are infected nodes
on both levels. Since we are interested in stationarity we may simply disregard 
what happens before $Z$ and view this as starting the front process $F$ at time $Z$,
$F_t=\hat F_{Z+t}$, $t\geq 0$. 
Then $F$ is a continuous time Markov chain with intensity matrix given by (\ref{Q}),
as before, albeit started in a random state instead of $F_0=0$. 
$Z$ is dominated by a mean 1 exponential random variable
(the  edge between $(0,0)$ and $(0,1)$). As $F$ will converge to the
stationary distribution $\Pi$, so will $\hat F$. 

Hence, infecting just a single node initially makes no difference to the time constant.
(In fact, starting with an initial infection of any finite number of nodes
will not change the time constant.) 

This time constant also gives an upper bound for the time constant for
first-passage percolation with mean 1 exponential r.v.'s on $\mathbb Z^2$,
but not stronger than already existing bounds, see \cite{AP02}.

\subsection{Residual times}
The fact that we can write down the stationary distribution $\Pi$ for 
the process $F_t$ also gives us the opportunity to calculate another statistic
for first passage percolation on the ladder, namely the average residual time. 
As far as we know, this quantity has not previously been studied for percolation. 
Define the \emph{residual time at $t$} as $R_t=\inf\{s: N_{t+s}=N_t+1\}$, i.e.\ the
time it takes, after $t$, for the infection to spread one more step up the ladder.
We are interested in $T=\ex R_t$ for a late time $t$. 

One might a priori think that $T$ and $\tau$ are the same thing. If we were
looking at percolation on $\help$, which is the same as a Poisson process, then they are. This is
the waiting time paradox; on one hand: looking at a fixed time point increases the
probability of choosing a long time interval,
but on the other: typically half that interval has passed. For a Poisson process, these effects balance
out. For percolation on a ladder, as shown  in (\ref{timeconstant}) and (\ref{av.resid.time}) below,
they do not. 

Now, to calculate $T$ we will assume that $F$ has its stationary distribution, so that
we may disregard time and put $R=R_t$. 
Let $\gamma_n =\ex[R | F=n]$. Hence, $\gamma_n$ is the
expected time it takes for the infection to spread another step given that
the front is in state $n$.

If $F=0$ then the residual time is the minimum of two mean 1 exponential random variables,
so that $\gamma_0=1/2$. For $n\geq 1$, we write down a recursive formula for $\gamma_n$.
If $F=n$ then there are $n+2$ possible edges, equally likely, 
that the infection might spread along.
Only one of these results in an increase in the height of the infection, namely the one that
gives $F=n+1$. Two edges result in $F=n-1$ and the remaining ones 
to one of the states $\{0,1,\ldots,n-2\}$, respectively.   
This gives the formula
\begin{equation}\label{gamman}
 \gamma_n=\frac{1}{n+2}\left(1+2\gamma_{n-1}+\sum_{j=0}^{n-2}\gamma_j\right).
\end{equation}
By writing down the formula for $\gamma_{n-1}$ one sees that
\[1+\sum_0^{n-2}\gamma_j=(n+1)\gamma_{n-1}-\gamma_{n-2}\] which inserted
into (\ref{gamman}) yields the incremental relation
\[ (n+2)(\gamma_{n}-\gamma_{n-1})=\gamma_{n-1}-\gamma_{n-2}, \]
which together with the fact that $\gamma_1=\frac13(1+2\frac12)=\frac23$ in turn gives us the formulas
\begin{equation}\label{incogamma}
 \gamma_n-\gamma_{n-1}=\frac 1{(n+2)!},\;n\geq 1,\quad\mbox{and}\quad \gamma_n=\sum_{j=0}^{n+2}\frac{1}{j!},\;n\geq 0.
\end{equation}

Then, as $F_t$ is in state $j$ with probability $\pi_j$ where it remains on average $\gamma_j$
amount of time before $N_t$ increases, we get the average residual time $T$ as
\begin{align} 
T &=\sum_{n=0}^\infty \pi_n\gamma_n
    =\frac{1}{2J_3+J_0}\left(J_0/2+2\sum_{n=1}^\infty (J_{n+2}-J_{n+3})\gamma_n\right) \nonumber \\
&=\frac{1}{2J_3+J_0}\left( \frac 12J_0+ 2J_3\gamma_1 + 2\sum_{n=1}^\infty(\gamma_{n+1}-\gamma_{n})J_{n+3} \right) \nonumber \\
&=\frac{1}{2J_3+J_0}\left(\frac12J_0+\frac 43J_3+2\sum_{n=1}^\infty \frac{J_{n+3}}{(n+3)!}\right) 
=0.5953444665\ldots \label{av.resid.time}
\end{align}

\vspace{0.3cm}
\noindent\textbf{Acknowledgements:} Thanks to Sven Erick Alm
 for introducing me to this problem and to Svante Janson for making  useful suggestions
regarding the mathematics. Also, we wish to thank an anonymous referee
for carefully reading the manuscript and providing us with numerous helpful comments.

\end{document}